\DeclareTextSymbol{\thh}{T1}{254}
\newtheorem{thm}{Theorem}[section]
\newtheorem{lemma}[thm]{Lemma}
\newtheorem{prop}[thm]{Proposition}
\newtheorem{cor}[thm]{Corollary}
\theoremstyle{definition}
\newtheorem{df}[thm]{Definition}
\newtheorem{ex}[thm]{Example}
\newcommand{\Z}{\mathbb{Z}}
\newcommand{\D}{\curly{D}}
\def\indsym#1#2{%
  \setbox0=\hbox{$\m@th#1x$}%
  \kern\wd0%
  \hbox to 0pt{\hss$\m@th#1\mid$\hbox to 0pt{$\m@th#1^{#2}$}\hss}%
  \lower.9\ht0\hbox to 0pt{\hss$\m@th#1\smile$\hss}%
  \kern\wd0}
\def\nindsym#1#2{%
  \setbox0=\hbox{$\m@th#1x$}%
  \kern\wd0%
  \hbox to 0pt{\hss$\m@th#1\not$\kern1.4\wd0\hss}
  \hbox to 0pt{\hss$\m@th#1\mid$\hbox to 0pt{$\m@th#1^{\,#2}$}\hss}%
  \lower.9\ht0\hbox to 0pt{\hss$\m@th#1\smile$\hss}%
  \kern\wd0}
\def\dotminussym#1#2{%
  \setbox0=\hbox{$\m@th#1-$}%
  \kern.5\wd0%
  \hbox to 0pt{\hss\hbox{$\m@th#1-$}\hss}%
  \raise.6\ht0\hbox to 0pt{\hss$\m@th#1.$\hss}%
  \kern.5\wd0}
\def \r { {\mathbb R} }
\def \<{\langle}
\def \>{\rangle}
\def \n {\mathbb N}
\def \*Z {{{^*}\Z}}
\def \((  {(\!(}
\def \)) {)\!)}
\def \dcl{\operatorname{dcl}}
\def \int{\operatorname{int}}
\numberwithin{equation}{section}
\def \ker{\operatorname{Ker}}
\def \coker{\operatorname{Coker}}
\def \sp{\overline{\operatorname{sp}}}
\def \span{\operatorname{span}}
\def \H{\mathbb{H}}
\def \D{\mathfrak{D}}
\def \c{\mathbb{C}}
\def \Re{\mathfrak{Re}}
\def \Im{\mathfrak{Im}}
\begin{document}

\title{Definable Operators on Hilbert Spaces}

\author{Isaac Goldbring}

\address {University of California, Los Angeles, Department of Mathematics, 520 Portola Plaza, Box 951555, Los Angeles, CA 90095-1555, USA}
\email{isaac@math.ucla.edu}
\urladdr{www.math.ucla.edu/~isaac}

\begin{abstract}
Let $H$ be an infinite-dimensional (real or complex) Hilbert space, viewed as a metric structure in its natural signature.  We characterize the definable linear operators on $H$ as exactly the ``scalar plus compact'' operators.
\end{abstract}
\maketitle

\section{Introduction}

The continuous theory of infinite-dimensional (real) Hilbert spaces, denoted $\operatorname{IHS}$ in \cite{BBHU}, is one of the most well-understood theories in continuous logic.  For example, $\operatorname{IHS}$ admits quantifier elimination, is $\kappa$-categorical for every infinite cardinal $\kappa$, and is $\omega$-stable; moreover, one can identify the relation of nonforking independence concretely in terms of orthogonality of vectors.  In addition, one can completely understand the definable closure relation and the natural metric on the type spaces.  (See Section 15 of \cite{BBHU} for a more thorough discussion of the theory $\operatorname{IHS}$.)  However, there has yet to be any mention of what the definable sets or functions are in this theory.  In fact, there had yet to be any real study of definable functions in any metric structure until the paper \cite{Gold} analyzed the definable functions in the Urysohn sphere.

In this paper, we only study the definable \emph{linear operators} on Hilbert spaces, for studying arbitrary definable functions seems a bit out of reach at the moment.  As in \cite{Gold}, the key observation is the following:  If $\mathcal{M}$ is a metric structure, $A\subseteq M$ is a parameterset, and $f:M\to M$ is an $A$-definable function, then for every $x\in M$, we have $f(x)\in \dcl(Ax)$, where $\dcl$ stands for \emph{definable closure}.  Thus, in any theory where $\dcl$ is well-understood, one can begin to understand the definable functions.  In models of $\operatorname{IHS}$, the definable closure of a parameterset is equal to its closed linear span; see Lemma 15.3 of \cite{BBHU}.

Our main result is the following:  Let $H$ be an infinite-dimensional real (resp. complex) Hilbert space.  Then the definable linear operators on $H$ are exactly the ``scalar plus compact'' operators $\lambda I+K$, where $\lambda\in \r$ (resp. $\lambda \in \c$), $I:H\to H$ is the identity operator, and $K:H\to H$ is a compact operator.  In particular, this shows that many ``intuitively definable'' bounded linear operators on $H$ are not actually definable; for example, the left- and right-shift operators on $\ell^2$ are not definable.  This result shows that there can be an inherent gap between what is ``intuitively'' definable in a metric structure and what is actually definable.  (This is in contrast to \cite{Gold}, where many functions on the Urysohn sphere are shown to be non-definable; these functions are constructed in such a way that they do not appear to be definable in any reasonable sense of the word, whence the intuition and the logic agree.)  Another consequence of our main theorem is that the definable linear operators are closed under taking adjoints.

We also introduce a natural signature for complex Hilbert spaces and show that the characterization of definable linear operators as exactly the scalar plus compact operators persists in this context as well.  Since there are a few more structural results specific to operators on complex Hilbert spaces, our characterization of definable operators yields some extra corollaries in the complex situation, most notably the fact that the invariant subspace problem has a positive solution when restricted to definable operators.

On a side note, one should mention that the class of ``scalar plus compact'' operators has shown up in the recent work of Argyros-Haydon \cite{AH} where Banach spaces $X$ are constructed so that the only bounded linear operators on $X$ are the ``scalar plus compact'' operators.  According to Gowers' blog \cite{Gowers}, ``the Argyros-Haydon space has very definitely taken over as the new `nastiest known Banach space', in a sense that it has almost no non-trivial structure.'' 

We assume that the reader is familiar with the basics of continuous logic.  For the reader unacquainted with continuous logic, the survey \cite{BBHU} is the natural place to start.

I would like to thank Alex Berenstein and Christian Rosendal for useful discussions concerning this work.

\section{Preliminaries}

In this section, we let $H$ be an arbitrary infinite-dimensional real Hilbert space, viewed as a metric structure in the natural many-sorted language for Hilbert spaces, which we now briefly recall for the convenience of the reader.  For each $n\geq 1$, we have a sort for $B_n(H):=\{x\in H \ | \ \|x\|\leq n\}$.  For each $1\leq m\leq n$, we have a function symbol $I_{m,n}:B_m(H)\to B_n(H)$ for the inclusion mapping.  We also have, for each $n\geq 1$, the following symbols:
\begin{itemize}
\item function symbols $+,-:B_n(H)\times B_n(H)\to B_{2n}(H)$;
\item function symbols $r\cdot:B_n(H)\to B_{kn}(H)$ for all $r\in \r$, where $k$ is the unique natural number satisfying $k-1\leq |r|<k$;
\item a predicate symbol $\langle \cdot,\cdot \rangle: B_n(H)\times B_n(H)\to [-n^2,n^2]$;
\item a predicate symbol $\|\cdot \|:B_n(H)\to [0,n]$.
\end{itemize}


Observe that adding the norm as a predicate symbol is not altogether necessary since the norm is given by a quantifier-free formula using the inner product.  Finally, the metric on each sort is given by $d(x,y):=\|x-y\|$.  

Normally, the notion of a definable function is defined for functions from a product of sorts to another sort.  Thus, we must say exactly what we mean by a definable function $f:H\to H$.

\begin{df}\label{def}
Let $A\subseteq H$.  We say that a function $f:H\to H$ is \emph{$A$-definable} if:
\begin{enumerate}
\item for each $n\geq 1$, $f(B_n(H))$ is bounded; in this case, we let $m(n,f)\in \n$ be the minimal $m$ such that $f(B_n(H))$ is contained in $B_m(H)$;
\item for each $n\geq 1$ and each $m\geq m(n,f)$, the function $$f_{n,m}:B_n(H)\to B_m(H), \quad f_{n,m}(x)=f(x)$$ is $A$-definable, that is, the predicate $P_{n,m}:B_n(H)\times B_m(H)\to [0,1]$ defined by $P_{n,m}(x,y)=d(f(x),y)$ is $A$-definable. 
\end{enumerate} 
\end{df}


\noindent Observe that, since each $f_{n,m}$ can be defined using only countably many elements of $A$, a definable function $H\to H$ is always definable using only countably many parameters.  We will also need the following basic facts about definable functions:

\begin{lemma}\label{algebra}
If $f_1,f_2:H\to H$ are $A$-definable and $r\in \r$, then:
\begin{enumerate}
\item $r\cdot f_1$ is $A$-definable;
\item $f_1+f_2$ is $A$-definable;
\item $f_2\circ f_1$ is $A$-definable.
\end{enumerate}
\end{lemma}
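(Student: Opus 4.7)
The plan is to verify both clauses of Definition \ref{def} directly in each case. Clause (1) is routine: one takes $m(n, rf_1) \leq \lceil |r| \rceil \cdot m(n, f_1)$, $m(n, f_1+f_2) \leq m(n, f_1) + m(n, f_2)$, and $m(n, f_2\circ f_1) \leq m(m(n,f_1), f_2)$. The real content is clause (2), namely showing that $(x, y) \mapsto d(g(x), y)$ is an $A$-definable predicate on $B_n(H) \times B_m(H)$ for the new function $g$ in each case.

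The uniform strategy is to express $d(g(x), y)$ as an infimum, over an appropriate bounded sort, of a combination of predicates that are either $A$-definable by hypothesis or quantifier-free definable, and then to invoke the fact that definable predicates in continuous logic are closed under continuous combinations and under $\inf$ over a sort. Concretely, with $z$ ranging over $B_{m(n,f_1)}(H)$,
$$d(rf_1(x), y) = \inf_z\bigl(|r|\cdot d(f_1(x), z) + d(rz, y)\bigr),$$
and with $z_i$ ranging over $B_{m(n, f_i)}(H)$,
$$d(f_1(x) + f_2(x), y) = \inf_{z_1, z_2}\bigl(d(f_1(x), z_1) + d(f_2(x), z_2) + d(z_1 + z_2, y)\bigr).$$
For composition, I would use that $f_2$, being $A$-definable, is uniformly continuous on each bounded sort. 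Fixing $n$ and letting $\omega$ be a modulus of uniform continuity for the restriction of $f_2$ to $B_{m(n,f_1)}(H)$, and with $z$ ranging over $B_{m(n, f_1)}(H)$, one should have
$$d(f_2(f_1(x)), y) = \inf_z\bigl(\omega(d(f_1(x), z)) + d(f_2(z), y)\bigr).$$

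Each of the three identities is verified by the same two-step check. The $\geq$ direction comes from plugging in the witness $z := f_1(x)$ (resp.\ $z_i := f_i(x)$), which lies in the sort being quantified over precisely because of the boundedness hypothesis in Definition \ref{def}(1). The $\leq$ direction is the triangle inequality: in case (1) and (2) it is immediate from the relevant identities $rf_1(x) - y = (rf_1(x) - rz) + (rz - y)$ and $f_1(x)+f_2(x)-y = (f_1(x)-z_1)+(f_2(x)-z_2)+(z_1+z_2-y)$, and in case (3) it uses the modulus $\omega$ to replace $d(f_2(f_1(x)), f_2(z))$ by $\omega(d(f_1(x), z))$.

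Once these identities are in hand, each integrand is visibly a definable predicate: scalar multiplication $r\cdot$ and addition are quantifier-free definable via the function symbols of the language, continuous functions such as $\omega$ and $|r|\cdot$ preserve definability when applied to definable predicates, and the remaining factors $d(f_i(x), z_i)$ are $A$-definable by hypothesis. The $\inf$ over a sort then preserves definability. I do not anticipate any serious obstacle; the only real subtlety is making sure that the infima are taken over bounded sorts rather than over all of $H$, and this is exactly what the boundedness clause in Definition \ref{def} makes possible.
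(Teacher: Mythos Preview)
Your argument is correct. The infimum identities you wrote down hold for the reasons you give, and the integrands are indeed $A$-definable predicates (for part (3) one must choose $\omega$ continuous with $\omega(0)=0$, which is always possible; you might make this explicit).

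The paper, however, proceeds differently and more directly, exploiting the linear structure rather than introducing auxiliary infima. For (1) it simply writes
\[
\|rf_1(x)-y\| \;=\; |r|\cdot\bigl\|f_1(x)-\tfrac{1}{r}y\bigr\|,
\]
which is the $A$-definable predicate $Q(x,z)=\|f_1(x)-z\|$ evaluated at $z=\tfrac{1}{r}y$ (a term in the language). For (2) it writes
\[
\|(f_1+f_2)(x)-y\| \;=\; \bigl\|f_1(x)-(y-f_2(x))\bigr\|,
\]
and observes that substituting the $A$-definable function $f_2$ into the second slot of the $A$-definable predicate $\|f_1(x)-z\|$ again yields an $A$-definable predicate. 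Part (3) is handled the same way: substitute the definable function $f_1$ into the definable predicate $d(f_2(z),y)$. So the paper's engine throughout is ``definable functions may be plugged into definable predicates,'' together with the algebraic identities $rf_1(x)-y=r(f_1(x)-\tfrac{1}{r}y)$ and $(f_1+f_2)(x)-y=f_1(x)-(y-f_2(x))$, rather than the triangle-inequality/infimum trick you use. Your route has the virtue of being structure-agnostic (it would work in any metric structure, not just a linear one), while the paper's route is shorter here and avoids the mild technicality of choosing a continuous modulus in part (3).
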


\begin{proof}
(1) Without loss of generality, we may suppose that $r\not=0$.  Fix $n\geq 1$ and $m\geq m(n,r\cdot f_1)$.  Fix $x$ a variable of sort $B_n(H)$ and $y$ a variable of sort $B_m(H)$.  Let $k$ be the unique natural number such that $k-1\leq \frac{1}{|r|}<k$.  Let $Q:B_n(H)\times B_{km}(H)\to [0,1]$ be the $A$-definable predicate $Q(x,z)=\|f_1(x)-z\|$.  Then $\|(r\cdot f_1)(x)-y\|=|r|\cdot Q(x,\frac{1}{r}\cdot y)$, which is an $A$-definable predicate.  

\

\noindent (2)  Fix $n\geq 1$ and $m\geq m(n,f_1+f_2)$.  Fix $x$ a variable of sort $B_n(H)$ and $y$ a variable of sort $B_m(H)$.  Set $m':=\max(m,m(n,f_1),m(n,f_2))$.  Let $Q':B_n(H)\times B_{2m'}(H)\to [0,1]$ be the $A$-definable predicate $Q'(x,z)=\|f_1(x)-z\|$.  Then we have
$$\|(f_1+f_2)(x)-y\|=Q'(x,I_{m,m'}(y)-I_{m(n,f_2),m'}(f_2(x))),$$ which is an $A$-definable predicate since $f_2$ is an $A$-definable function.

\

\noindent (3)  One can just adapt the proof of this fact from 1-sorted continuous logic, keeping track of the sorts of variables as in the first two parts of the proof.
\end{proof}

It is evident from the proof of the above theorem that keeping track of which sorts various terms lie in can become quite cumbersome.  Thus, in the rest of this paper, we reserve the right to become a bit looser in this regards.

In the rest of this section, we fix $A\subseteq H$ and let $P:H\to H$ denote the orthogonal projection map onto $\sp(A)$; here, and in the rest of this paper, $\sp$ denotes closed linear span.
 
\begin{lemma}
Given $x\in H$, we have that $\sp(A\cup\{x\})=\sp(A)\oplus \r\cdot (x-Px)$.
\end{lemma}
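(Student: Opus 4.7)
The plan is to exploit the orthogonal decomposition $x = Px + (x-Px)$, where $Px \in \sp(A)$ and $x-Px$ is orthogonal to $\sp(A)$ by the defining property of the projection $P$.

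First I would verify the easy inclusion $\sp(A) \oplus \r(x-Px) \subseteq \sp(A\cup\{x\})$: clearly $\sp(A) \subseteq \sp(A\cup\{x\})$, and since $Px \in \sp(A) \subseteq \sp(A\cup\{x\})$ while $x \in \sp(A\cup\{x\})$, also $x-Px \in \sp(A\cup\{x\})$, so the whole sum lies inside.

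For the reverse inclusion, I would observe that $x = Px + (x-Px) \in \sp(A) + \r(x-Px)$, so the generating set $A\cup\{x\}$ sits inside $\sp(A) + \r(x-Px)$. To conclude that $\sp(A\cup\{x\}) \subseteq \sp(A) + \r(x-Px)$, I need $\sp(A) + \r(x-Px)$ to itself be a closed subspace. This is immediate in our setting: either $x - Px = 0$, in which case the sum is just $\sp(A)$, or $x - Px \neq 0$ and is orthogonal to $\sp(A)$, so the sum is an orthogonal direct sum of closed subspaces (one closed, one finite-dimensional), hence closed. The same dichotomy also verifies that the sum is direct: in the first case trivially, and in the second because any nonzero vector in $\r(x-Px)$ has nonzero component orthogonal to $\sp(A)$ and therefore cannot lie in $\sp(A)$.

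There is really no serious obstacle here; the only subtlety worth flagging is the closedness of $\sp(A) + \r(x-Px)$, which would fail for a general pair of closed subspaces but holds here because $\r(x-Px)$ is one-dimensional (and, better still, orthogonal to $\sp(A)$).
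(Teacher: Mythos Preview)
Your argument is correct. The key point in both your proof and the paper's is that $\sp(A)+\r\cdot(x-Px)$ is closed because $x-Px$ is orthogonal to $\sp(A)$; you simply invoke this as a standard fact about orthogonal (or closed-plus-finite-dimensional) sums, whereas the paper unpacks it by hand: it takes $z=\lim z_n$ with $z_n=y_n+\lambda_n x\in\span(A\cup\{x\})$, rewrites $z_n=(y_n+\lambda_n Px)+\lambda_n(x-Px)$, and uses the Pythagorean identity to see that both components form Cauchy sequences. So the underlying idea is the same; your version is a bit more streamlined, and you also make explicit the trivial case $x-Px=0$ and the directness of the sum, which the paper leaves implicit.
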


\begin{proof}
The inclusion $\supseteq$ is clear.  We now prove the inclusion $\subseteq$.  We may suppose that $Px\not=x$.  Now suppose that $z\in \sp(A\cup \{x\})$, so $z=\lim z_n$, where $z_n\in \span(A\cup \{x\})$.  Write $z_n=y_n+\lambda_nx$, where $y_n\in \span(A)$ and $\lambda_n\in \r$.  Then $z_n=(y_n+\lambda_nPx)+\lambda_n(x-Px).$  Set $w_n:=y_n+\lambda_nPx\in \sp(A).$  Now
$$\|z_m-z_n\|^2=\|w_m-w_n\|^2+|\lambda_m-\lambda_n|^2\|x-Px\|^2,$$ so $w_n\to w\in \sp(A)$ and $\lambda_n\to \lambda\in \r$.  It follows that $$z=w+\lambda(x-Px)\in \sp(A)\oplus \r\cdot (x-Px).$$
\end{proof}

\begin{cor}
Suppose that $f:H\to H$ is $A$-definable and $x\in H$.  Then $f(x)\in \sp(A)\oplus \r\cdot (x-Px)$.
\end{cor}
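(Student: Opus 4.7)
The plan is to combine the standard fact from continuous logic that values of definable functions lie in the definable closure of the parameters, together with the concrete description of $\dcl$ in $\operatorname{IHS}$ and the preceding lemma; the result will then drop out almost immediately.

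First, I would fix $x\in H$ and choose $n\geq 1$ large enough that $x\in B_n(H)$, and set $m:=m(n,f)$. By Definition \ref{def}(2), the single-sorted function $f_{n,m}:B_n(H)\to B_m(H)$ is $A$-definable in the usual sense. Hence the standard fact in continuous logic (the one already invoked in the introduction: if $g$ is $A$-definable then $g(x)\in\dcl(Ax)$) applies to $f_{n,m}$ and gives $f(x)=f_{n,m}(x)\in\dcl(A\cup\{x\})$.

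Next, I would invoke Lemma 15.3 of \cite{BBHU}, which identifies $\dcl$ in models of $\operatorname{IHS}$ with closed linear span. Applied with parameter set $A\cup\{x\}$, this yields $f(x)\in\sp(A\cup\{x\})$. The preceding lemma then rewrites $\sp(A\cup\{x\})$ as $\sp(A)\oplus\r\cdot(x-Px)$, finishing the proof.

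The only substantive point is checking that the many-sorted notion of definable function in Definition \ref{def} really does fall under the scope of the standard ``$f(x)\in\dcl(Ax)$'' fact. This is the step I would expect to require a small amount of care, but it is handled by passing to the single-sorted restriction $f_{n,m}$, which is $A$-definable in the usual sense and takes the same value at $x$ as $f$ does. Once that is in hand, the corollary is just a chain of set-theoretic inclusions.
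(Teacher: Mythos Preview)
Your proposal is correct and follows exactly the same route as the paper: the paper's one-line proof simply cites the fact that $\dcl(B)=\sp(B)$ for any $B\subseteq H$ and implicitly combines this with the preceding lemma, while you spell out the passage through the single-sorted restriction $f_{n,m}$ in more detail. The extra care you take about the many-sorted setup is reasonable but not treated in the paper, which leaves that point implicit.
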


\begin{proof}
This follows from the fact that $\dcl(B)=\sp(B)$ for any $B\subseteq H$.
\end{proof}

Suppose that $\H$ is an elementary extension of $H$.  Suppose that $f:H\to H$ is an $A$-definable function.  Fix $n\geq 1$ and $m\geq m(n,f)$.  By Proposition of 9.25 of \cite{BBHU},  there is a natural extension of $f_{n,m}$ to an $A$-definable function $f_{n,m}:B_n(\H)\to B_m(\H)$.  Moreover, by elementarity, we see that if $n'\geq n$, $m\geq m(n,f)$, $m'\geq m(n',f)$ and $x\in B_n(\H)$, then $f_{n,m}(x)=f_{n',m'}(x)$, whence the $f_{n,m}$'s piece together to yield an $A$-definable function $f:\H\to\H$.

\section{Definable Operators on Real Hilbert Spaces}

\noindent In this section, we continue to let $H$ be an infinite-dimensional real Hilbert space.  We aim to prove the following:

\begin{thm}\label{main}
Suppose that $T:H\to H$ is a bounded linear map.  Then $T$ is definable if and only if there is $\lambda\in \r$ and a compact operator $K:H\to H$ such that $T=\lambda I+K$.
\end{thm}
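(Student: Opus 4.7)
The plan is to prove the two directions of Theorem~\ref{main} separately. The reverse direction is routine: the identity operator is definable, hence $\lambda I$ is by Lemma~\ref{algebra}(1); any rank-one operator $x \mapsto \langle x, e\rangle f$ is $\{e,f\}$-definable by expanding $\|\langle x,e\rangle f - y\|^2$ as a polynomial in the definable predicates $\langle x, e\rangle$, $\langle f, y\rangle$, and $\langle y, y\rangle$. Linearity (Lemma~\ref{algebra}) then shows that every finite-rank operator is definable, and since compact operators are norm limits of finite-rank operators while uniform limits of definable predicates on bounded sets remain definable predicates, $\lambda I + K$ is definable.

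For the forward direction, fix a countable $A \subseteq H$ for which $T$ is $A$-definable and pass to a proper ultrapower $\H$ of $H$, extending $T$ to the $A$-definable operator $\hat T$ on $\H$ as at the end of Section~2. For any $x \in \H$ with $x \perp H$, the projection $Px$ of $x$ onto $\sp(A)$ vanishes, so the Corollary gives a decomposition $\hat T x = a_x + \mu_x x$ with $a_x \in \sp(A) \subseteq H$ and $\mu_x \in \r$.

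The crux of the argument is to show that $\mu_x$ is a single constant $\lambda$ and that $a_x = 0$ for every such $x$. By quantifier elimination for $\operatorname{IHS}$, any two unit vectors in $\H$ orthogonal to $H$ have the same type over $H$ and are therefore conjugate by an automorphism $\sigma$ of $\H$ fixing $H$ pointwise; since $A \subseteq H$ and $\hat T$ is $A$-definable, such a $\sigma$ carrying $x$ to $y$ sends $\hat T x = a_x + \mu_x x$ to $a_x + \mu_x y$, which must equal $\hat T y = a_y + \mu_y y$. Matching components in $H$ and in $H^\perp$ yields $a_x = a_y =: a$ and $\mu_x = \mu_y =: \lambda$. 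Now choose two orthogonal unit vectors $x, y \in H^\perp \cap \H$ (easily constructed in the ultrapower from an orthonormal sequence in $H$) and apply the same conclusion to the unit vector $(x+y)/\sqrt{2} \in H^\perp$: linearity of $\hat T$ forces $\sqrt{2}\,a = a$, so $a = 0$. By homogeneity, $\hat T x = \lambda x$ for every $x \in \H$ with $x \perp H$.

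It remains to deduce that $K := T - \lambda I$ is compact. If not, a standard weak-compactness argument on the closed unit ball of $H$ produces $\epsilon > 0$ and a bounded sequence $(x_n) \subseteq H$ with $x_n \rightharpoonup 0$ and $\|K x_n\| \geq \epsilon$ for all $n$. The ultrapower element $x = (x_n)_{\mathcal U}$ then satisfies $\langle x, h\rangle = \lim_{\mathcal U} \langle x_n, h\rangle = 0$ for every $h \in H$, so $x \perp H$; by the previous paragraph $\hat T x = \lambda x$, yet $\|\hat T x - \lambda x\| = \lim_{\mathcal U} \|K x_n\| \geq \epsilon$, a contradiction. The main obstacle I anticipate is the type/automorphism step establishing $a = 0$, since it relies on the precise interplay between quantifier elimination for $\operatorname{IHS}$, the decomposition furnished by the Corollary, and the linearity of $\hat T$; once that identity is in place, the passage from ``$\hat T = \lambda I$ on $H^\perp$'' to ``$T - \lambda I$ is compact'' is essentially a translation of the compactness criterion via weakly null sequences into the ultrapower.
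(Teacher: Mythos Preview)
Your argument is correct, and the reverse direction matches the paper's Proposition~\ref{cpt} essentially verbatim.  The forward direction, however, takes a genuinely different route.

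The paper works in an $\omega_1$-saturated extension and analyzes $T$ on all of $\sp(A)^\perp$ (not just on $H^\perp$): for nonzero $x,y\in\sp(A)^\perp$ one compares the decompositions of $T(x)$, $T(y)$, $T(x+y)$ furnished by the Corollary and reads off directly that the scalars agree.  This yields the global identity $T=P\circ T+\lambda(I-P)$ on $\H$ (Proposition~\ref{lambda}), so in particular $(T-\lambda I)(\H)\subseteq\sp(A)$; since $\sp(A)$ is separable, $\omega_1$-saturation then produces a finite $\epsilon$-net for $(T-\lambda I)(B_1(\H))$, giving compactness.

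You instead restrict attention to $H^\perp\subseteq\H$, invoke quantifier elimination together with an explicit unitary of $\H$ fixing $H$ to transport the decomposition from one unit vector of $H^\perp$ to another, and extract $a=0$ via $(x+y)/\sqrt{2}$.  Your compactness step then rephrases the criterion ``$K$ is compact iff $K$ sends weakly null sequences to norm-null sequences'' in ultrapower language.  Both steps are sound; the only thing to be aware of is that the automorphism you need is available not by abstract homogeneity of the ultrapower (which could fail over an uncountable $H$) but because one can write it down as $\id_H\oplus U$ for a suitable unitary $U$ on $H^\perp$, and every unitary is an automorphism of the structure.  What the paper's approach buys is the stronger formula $T=P\circ T+\lambda(I-P)$, which is used verbatim in the later corollaries (e.g.\ $\ker(T)\subseteq\sp(A)$ in Corollary~\ref{fin}, and $E_\mu(T)\subseteq\sp(A)$ in Corollary~\ref{eigen}); your conclusion $\hat T|_{H^\perp}=\lambda I$ proves the theorem but would need a short extra argument to recover those refinements.
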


We can rephrase this theorem as follows.  Let $\D(H)$ denote the algebra of definable linear operators on $H$.  Let $\mathfrak{B}(H)$ denote the Banach algebra of bounded linear operators on $H$ and let $\mathfrak{B}_0(H)$ denote the closed, two-sided ideal of $\mathfrak{B}(H)$ consisting of the compact operators on $H$.  Finally, let $\mathfrak{C}(H)=\mathfrak{B}(H)/\mathfrak{B}_0(H)$ denote the Calkin algebra of $H$ with quotient map $\pi:\mathfrak{B}(H)\to \mathfrak{C}(H)$.  If $e$ is the unit element of $\mathfrak{C}(H)$, then we view $\r$ as a subalgebra of $\mathfrak{C}(H)$ by identifying it with $\r\cdot e$.  Then Theorem \ref{main} states that $\D(H)=\pi^{-1}(\r)$. 

\

\noindent We first prove the ``if'' direction of Theorem \ref{main}.
\begin{prop}\label{cpt}
Suppose that $T:H\to H$ is a linear operator on $H$.
\begin{enumerate}
\item If $T$ is a finite-rank operator, then $T$ is definable.  In fact, $d(T(x),y)$ is given by a formula.  
\item If $T$ is a compact operator, then $T$ is definable.
\end{enumerate}
\end{prop}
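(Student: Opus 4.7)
For part (1), the plan is to use the structure theorem for finite-rank operators: any finite-rank linear $T\colon H\to H$ can be written as $T(x)=\sum_{i=1}^k \langle x,u_i\rangle v_i$ for fixed vectors $u_1,\ldots,u_k,v_1,\ldots,v_k\in H$. Working over the parameter set $A=\{u_i,v_i:i\le k\}$, I would expand $\|T(x)-y\|^2$ using bilinearity into a polynomial expression in the atomic quantities $\langle x,u_i\rangle$, $\langle v_i,y\rangle$, $\|y\|^2$, together with the real-number coefficients $\langle v_i,v_j\rangle$. Each of these is a formula on the appropriate sort, and polynomials with real coefficients are permissible continuous connectives. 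Hence $\|T(x)-y\|^2$ is a formula, and applying the continuous connective $\sqrt{\cdot}$ gives the required formula for $d(T(x),y)$. One has to verify some sort bookkeeping: $T$ is bounded, so $T(B_n(H))$ sits inside some $B_{m(n,T)}(H)$, and one must check that the expansion produces an expression on the correct product sort; this is routine in the spirit of Lemma~\ref{algebra}.

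For part (2), the plan is to approximate the compact operator $T$ in operator norm by a sequence $(T_\ell)$ of finite-rank operators, which is always possible on a Hilbert space, and let $A$ be the (countable) union of parameters needed to define each $T_\ell$ via part (1). Fix $n\ge 1$ and $m\ge m(n,T)$; since $T$ is bounded, $T(B_n(H))$ is bounded, so $m(n,T)$ makes sense. I would then show that the formulas $P^{(\ell)}_{n,m}(x,y)=d(T_\ell(x),y)$ from part (1) converge uniformly on $B_n(H)\times B_m(H)$ to the target predicate $P_{n,m}(x,y)=d(T(x),y)$. This follows from the triangle inequality:
$$|d(T(x),y)-d(T_\ell(x),y)|\le \|T(x)-T_\ell(x)\|\le \|T-T_\ell\|\cdot n \longrightarrow 0.$$
Since a uniform limit on each sort of formulas is by definition a definable predicate, $T$ is $A$-definable.

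The plan has essentially no conceptual obstacle: (1) is elementary Hilbert-space algebra together with the observation that polynomials in atomic formulas are formulas, and (2) is a direct combination of norm-density of finite-rank operators in the compacts with the fact that definable predicates are precisely uniform limits of formulas on each bounded sort. The only real annoyance will be keeping track of sorts to ensure each intermediate expression lands on the correct product of bounded balls, but as the author has already indicated after Lemma~\ref{algebra}, this bookkeeping may reasonably be handled in a loose way.
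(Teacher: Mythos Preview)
Your proposal is correct and follows essentially the same route as the paper: for (1) you represent $T(x)=\sum_i\langle x,u_i\rangle v_i$ and expand $\|T(x)-y\|^2$ as a polynomial in atomic formulas (the paper first takes the $v_i$ orthonormal to kill the cross terms $\langle v_i,v_j\rangle$, but this is cosmetic), and for (2) you use norm-approximation by finite-rank operators together with the triangle-inequality estimate $|d(T(x),y)-d(T_\ell(x),y)|\le \|T-T_\ell\|\cdot n$, exactly as the paper does.
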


\begin{proof}
(1)  Suppose that $e_1,\ldots,e_n$ is an orthonormal basis for $T(H)$.  Then there exist bounded linear functionals $f_1,\ldots,f_n:H\to \r$ so that $$T(x)=f_1(x)e_1+\cdots +f_n(x)e_n$$ for all $x\in H$.  For each $i\in \{1,\ldots,n\}$, let $z_i\in H$ be the unique vector so that $f_i(x)=\langle x,z_i\rangle$ for all $x\in H$; this is possible by the Riesz Representation Theorem (see \cite{Con} Prop I.3.4).  Then $T(x)=\sum_{i=1}^n\langle x,z_i\rangle e_i$, whence, for $y\in H$, we have
$$d(T(x),y)=\sqrt{\sum_{i=1}^n (\langle x,z_i\rangle^2)-2\sum_{i=1}^n (\langle x,z_i\rangle \langle e_i,y\rangle)+\|y\|^2}.$$  

For (2), let $T$ be a compact operator and let $(T_n)$ be a sequence of finite-rank operators such that $\|T-T_n\|\to 0$; see, for example, \cite{Con} II.4.4.  Given $\epsilon >0$ and $n>0$, choose $N$ such that $\|T-T_N\|<\frac{\epsilon}{n}$.  Fix $m\geq m(n,T)$ and let $x$ and $y$ range over $B_n(H)$ and $B_m(H)$ respectively.  We then have
$$\big| \|T(x)-y\|-\|T_N(x)-y\|\big|\leq \|T(x)-T_N(x)\|<\epsilon.$$

\noindent Since $\|T_N(x)-y\|$ is given by a formula, we have that $\|T(x)-y\|$ is given by a definable predicate.
\end{proof}

Since $\lambda I$ is a definable linear map for every $\lambda\in \r$, the preceding proposition implies that $\lambda I+K$ is definable for every $\lambda\in \r$ and every $K\in \mathfrak{B}_0(H)$.  

We now aim to prove the ``only if'' direction of Theorem \ref{main}.  Until otherwise stated, we suppose that $T:H\to H$ is an $A$-definable linear operator, where $A\subseteq H$ is countable.  Furthermore, we fix a proper $\omega_1$-saturated elementary extension $\H$ of $H$ and we consider $T:\H\to \H$, the natural extension of $T$ to $\H$ as described at the end of the previous section.

\begin{lemma}
$T:\H\to \H$ is also linear.
\end{lemma}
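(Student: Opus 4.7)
The plan is to express linearity of $T$ as a family of first-order conditions on the $A$-definable predicates $P_{n,m}(x,y) := d(T(x),y)$ from Definition \ref{def}, and to transfer these conditions from $H$ to $\H$ by elementarity. Since the natural extension $T:\H\to\H$ was defined precisely so that each $T_{n,m}$ extends via the same defining predicate $P_{n,m}$ (now interpreted in $\H$), any first-order condition on the $P_{n,m}$'s that holds in $H$ automatically holds in $\H$.

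For additivity, Lemma \ref{algebra} tells us that the maps $(x,y)\mapsto T(x+y)$ and $(x,y)\mapsto T(x)+T(y)$ are both $A$-definable functions $H\times H\to H$, and they agree on $H$ because $T$ is linear there. Equality of two $A$-definable functions $f,g$ is itself a first-order condition: the predicate $d(f(x,y),g(x,y))$ can be rewritten as $\inf_z\bigl[d(f(x,y),z)+d(g(x,y),z)\bigr]$, which is $A$-definable, and additivity on $B_n$ asserts that its supremum over $B_n\times B_n$ equals $0$. Elementarity transfers this statement to $\H$, yielding $T(x+y)=T(x)+T(y)$ for all $x,y\in\H$. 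Homogeneity is entirely analogous: for each fixed $\lambda\in\r$, Lemma \ref{algebra} gives that $x\mapsto T(\lambda x)$ and $x\mapsto \lambda T(x)$ are $A$-definable, they agree on $H$, and the equality transfers to $\H$.

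The only point requiring care --- and exactly the issue the author flagged after the proof of Lemma \ref{algebra} --- is the bookkeeping of sorts: $x+y$, $T(x)+T(y)$, $\lambda x$ and $\lambda T(x)$ each land in a larger ball than their arguments, so the various $P_{n,m}$'s have to be threaded through the appropriate inclusion maps $I_{m,m'}$ to make sense of the expressions above. This is tedious but contains no essential mathematical difficulty; the genuine content of the lemma is a single application of elementarity to sentences that formalize additivity and $\lambda$-homogeneity for each $n$ and each $\lambda$.
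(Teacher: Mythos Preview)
Your proof is correct and follows essentially the same strategy as the paper: express linearity as conditions on the definable predicates built from $d(T(x),y)$ and transfer from $H$ to $\H$ by elementarity. The paper is slightly more explicit---it works directly with formulae $\varphi_k$ approximating $d(T(x),y)$ and transfers an approximate implication via Proposition~7.14 of \cite{BBHU}---whereas you package the same argument through Lemma~\ref{algebra} and the general fact that $\sup$-conditions on definable predicates are preserved in elementary extensions; these are two presentations of the same idea.
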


\begin{proof}
Fix $n\geq 1$ and set $m:=m(2n,T)$.  Let $(\varphi_k(x,y))$ be a sequence of formulae with parameters from $A$ such that, for all $x\in B_{2n}(H)$ and $y\in B_m(H)$ we have $|d(T(x),y)-\varphi_k(x,y)|\leq \frac{1}{k}$.  Then $$H\models \sup_{x,y,\in B_n(H)}\sup_{z,w_1,w_2\in B_m(H)}(\max(\varphi_k(x+y,z),\varphi_k(x,w_1),\varphi_k(x,w_2))\leq \frac{1}{k}$$ $$\Rightarrow d(z,w_1+w_2)\leq \frac{6}{k})).$$  By Proposition 7.14 of \cite{BBHU}, this implication is true in $\H$.  It follows that $T(x+y)=T(x)+T(y)$ for all $x,y\in \H$.  A similar argument proves that $T$ preserves scalar multiplication.   
\end{proof}

As in the previous section, we let $P:\H\to \H$ denote the orthogonal projection onto $\sp(A)$.

\begin{prop}\label{lambda}
There exists a unique $\lambda \in \r$ such that $T=P\circ T+\lambda I-\lambda P$.
\end{prop}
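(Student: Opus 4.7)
The plan is to rewrite the claimed identity $T = P \circ T + \lambda I - \lambda P$ in the equivalent form $(I-P) \circ T = \lambda (I-P)$, so the task becomes: find $\lambda \in \r$ such that for every $x \in \H$, the component of $T(x)$ orthogonal to $\sp(A)$ equals $\lambda$ times the component of $x$ orthogonal to $\sp(A)$. Since $T$ extends $A$-definably to $\H$, the preceding corollary applies in $\H$ as well, so for each $x \in \H$ one can write $T(x) = w(x) + \mu(x)(x-Px)$ with $w(x) \in \sp(A)$, and with $\mu(x) \in \r$ uniquely determined whenever $x - Px \neq 0$. I would then pinpoint $\lambda$ as the common value of $\mu(x)$ across all $x$ with $(I-P)x \neq 0$.

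The crucial step is showing this common value exists. Since $A$ is countable and $\H$ is a proper $\omega_1$-saturated elementary extension, saturation produces two nonzero vectors $x_1, x_2 \in \H$ that are orthogonal to each other and both orthogonal to $\sp(A)$. By linearity,
\[
T(x_1+x_2) = w(x_1) + w(x_2) + \mu(x_1)x_1 + \mu(x_2)x_2,
\]
whereas the corollary (combined with $P(x_1+x_2)=0$) also gives $T(x_1+x_2) = w' + \mu'(x_1+x_2)$ for some $w' \in \sp(A)$ and $\mu' \in \r$. Comparing these two expressions in the direct sum $\sp(A) \oplus \r x_1 \oplus \r x_2$ forces $\mu(x_1) = \mu(x_2) =: \lambda$, so a single scalar $\lambda$ works for every vector in $\H$ orthogonal to $\sp(A)$.

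To propagate $\lambda$ to arbitrary $x \in \H$ with $(I-P)x \neq 0$, I pick by saturation a $y \in \H$ orthogonal to both $\sp(A)$ and $x - Px$; then $\mu(y) = \lambda$ by the previous step. Applying linearity to $T(x+y)$ and comparing with the corollary's decomposition for $x+y$ (noting $P(x+y) = Px$, so $(x+y) - P(x+y) = (x-Px) + y$, a sum of orthogonal nonzero terms), one reads off $\mu(x) = \lambda$. For $x$ with $x = Px$, the identity $(I-P)T(x) = 0 = \lambda(I-P)(x)$ holds trivially because $T(x) \in \sp(A)$ by the corollary. Uniqueness of $\lambda$ is immediate, since saturation ensures $I-P$ is not the zero operator on $\H$. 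The main obstacle here is precisely the passage from ``one $\lambda$ for one vector'' to ``one $\lambda$ for all vectors'': the trick is to exploit the uniqueness of the decomposition $\sp(A) \oplus \r \cdot (v - Pv)$ simultaneously for two linearly independent vectors, together with the additivity of $T$.
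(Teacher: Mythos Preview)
Your argument is correct and follows the same core idea as the paper: use the decomposition $T(x)\in\sp(A)\oplus\r\cdot(x-Px)$ together with linearity of $T$ to pin down the scalar coefficient and show it is independent of $x$. The paper streamlines two of your steps. First, it does not require the test vectors in $\sp(A)^\perp$ to be orthogonal: for any nonzero $x,y\in\sp(A)^\perp$ linearity gives $\lambda_1 x+\lambda_2 y=\lambda_3(x+y)$, which already forces $\lambda_1=\lambda_2$ (the linearly independent case is immediate; the dependent case is trivial), so constancy of your $\mu$ on all of $\sp(A)^\perp\setminus\{0\}$ follows at once without needing to connect vectors through an orthogonal intermediary. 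Second, for arbitrary $x\in\H$ the paper avoids your auxiliary $y$ entirely: writing $x=Px+(x-Px)$ and noting that $T(Px)\in\sp(A)$ (since $Px-P(Px)=0$, the corollary gives $T(Px)\in\sp(A)$), one has
\[
T(x)=T(Px)+PT(x-Px)+\lambda(x-Px),
\]
and the first two summands on the right are exactly $PT(x)$. So your route works, but the detour through orthogonal pairs and the auxiliary $y$ is unnecessary.
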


\begin{proof}
First suppose that $x\in \sp(A)^\perp\subseteq \H$.  Then $T(x)-P(T(x))\in \r\cdot x$.  Suppose further that $y\in \sp(A)^\perp$.  Then there exist constants $\lambda_1,\lambda_2,\lambda_3\in \r$ such that $T(x)=P(T(x))+\lambda_1x$, $T(y)=P(T(y))+\lambda_2y$, and $T(x+y)=P(T(x+y))+\lambda_3(x+y)$.  From this we gather that $\lambda_1 x+\lambda_2 y=\lambda_3(x+y)$. It follows that if $x,y\not=0$, then $\lambda_1=\lambda_2$.  
Observe that, by $\omega_1$-saturation, $\sp(A)^\perp\not=\{0\}$.  Thus, there is a unique $\lambda\in \r$ such that, for all $x\in \sp(A)^\perp$, $T(x)=P(T(x))+\lambda x$.  Fix this $\lambda$ and suppose that $x\in \H$ is arbitrary.  Then $$T(x)=T(Px)+T(x-Px)=T(Px)+PT(x-Px)+\lambda(x-Px).$$  Since $Px\in \sp(A)$, we have $P(T(x))=T(Px)+PT(x-Px)$ and thus $T(x)=PT(x)+\lambda(x-Px)$.        
\end{proof}

\noindent From now on, we write $\lambda(T)$ for the unique $\lambda$ for which $T=P\circ T+\lambda I-\lambda P$.

\begin{prop}
$T-\lambda(T) I$ is a compact operator.
\end{prop}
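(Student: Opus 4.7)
The plan is to prove the proposition in two main steps: first, exploit a natural $A$-automorphism of $\H$ to show that $(T - \lambda(T) I)(y) = 0$ for every $y \in \sp(A)^\perp \cap \H$, and second, combine this with $\omega_1$-saturation and weak compactness in $H$ to force $T - \lambda(T) I$ to be compact on $H$.

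For the first step, define $\sigma : \H \to \H$ to be the identity on $\sp(A)$ and negation on $\sp(A)^\perp \cap \H$, extended linearly via the orthogonal decomposition. This $\sigma$ is a Hilbert space automorphism of $\H$ fixing $A$ pointwise, hence an $A$-automorphism of $\H$ as a metric structure, so it commutes with the $A$-definable function $T$. Given $y \in \sp(A)^\perp \cap \H$, we have $T(\sigma y) = -Ty$ on the one hand, and by Proposition \ref{lambda}, $\sigma(Ty) = \sigma(P T(y) + \lambda(T) y) = P T(y) - \lambda(T) y$ on the other. Equating and using $Ty = P T(y) + \lambda(T) y$ gives $P T(y) = 0$, so $(T - \lambda(T) I)(y) = P T(y) = 0$.

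For the second step, suppose toward contradiction that $T - \lambda(T) I$ is not compact on $H$. Then there is a bounded sequence $(x_n) \subseteq H$ such that $((T - \lambda(T) I)(x_n))$ has no Cauchy subsequence. Using weak compactness of bounded sets in $H$ and passing to subsequences, I arrange $x_n \to x^*$ weakly in $H$; setting $b_n := x_n - x^*$ and extracting further, I may assume $b_n \to 0$ weakly, $\|b_n\| \to c$, and $\|(T - \lambda(T) I)(b_n)\| \geq \delta$ for some $\delta > 0$ and all $n$. Here $c > 0$, for otherwise $b_n \to 0$ in norm would force $(T - \lambda(T) I)(b_n) \to 0$. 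Since $A$ is countable, $\omega_1$-saturation of $\H$ produces $b^* \in \H$ realising $\{\|v\| = c\} \cup \{\langle v, a \rangle = 0 : a \in A\}$; this $b^*$ lies in $\sp(A)^\perp \cap \H$, so by the first step $(T - \lambda(T) I)(b^*) = 0$.

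To conclude, observe that $x \mapsto \|(T - \lambda(T) I)(x)\|$ is an $A$-definable predicate on each bounded sort (it is the norm of the difference of an $A$-definable function and an $\emptyset$-definable one). By quantifier elimination for $\operatorname{IHS}$, such a predicate is a uniform limit, on each bounded set, of continuous functions of $\|x\|$ and of finitely many inner products $\langle x, a \rangle$ with $a \in A$. Since $\|b_n\| \to c = \|b^*\|$ and $\langle b_n, a \rangle \to 0 = \langle b^*, a \rangle$ for every $a \in A$, a routine $3\varepsilon$ argument yields $\|(T - \lambda(T) I)(b_n)\| \to \|(T - \lambda(T) I)(b^*)\| = 0$, contradicting $\|(T - \lambda(T) I)(b_n)\| \geq \delta$. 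The main technical point I expect is this final logic-continuity step; the automorphism computation and the saturation/weak-compactness reduction are quick once one is comfortable translating weak convergence in $H$ into logic-topology convergence of types over $A$.
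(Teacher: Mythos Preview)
Your argument is correct and takes a genuinely different route from the paper's. The paper works entirely inside $\H$: from $T-\lambda I = P\circ(T-\lambda I)$ it observes that $(T-\lambda I)(B_1(\H))$ lies in the separable space $\sp(A)$, notes that this image is closed by saturation, and then runs a direct total-boundedness argument---take a countable dense subset $(a_n)$ of the image, observe that the partial type $\{\|(T-\lambda I)(x)-a_n\|\geq\epsilon : n\in\n\}$ is inconsistent, and use $\omega_1$-saturation to extract a finite $\epsilon$-net. Your proof instead first establishes the strictly stronger structural fact that $T-\lambda I$ annihilates $\sp(A)^\perp\cap\H$, via the reflection $A$-automorphism, and then argues by contradiction on $H$: a hypothetical bad sequence is massaged (weak compactness, subtraction of the weak limit, Bolzano--Weierstrass on norms) into a sequence $(b_n)$ whose types over $A$ converge in the logic topology to that of a vector $b^*\in\sp(A)^\perp\cap\H$, and continuity of the $A$-definable predicate $\|(T-\lambda I)(\cdot)\|$ on the type space (via quantifier elimination) forces $\|(T-\lambda I)(b_n)\|\to 0$. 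The paper's proof is shorter and needs neither quantifier elimination nor any functional-analytic input beyond what is already on the table; your approach is longer but yields the bonus fact $(T-\lambda I)|_{\sp(A)^\perp}=0$ and makes explicit the pleasant identification of logic-topology convergence of $1$-types over $A$ with weak convergence together with convergence of norms.
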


\begin{proof}
Set $\lambda:=\lambda(T)$.  Observe that $T-\lambda I=P\circ (T-\lambda I)$, whence $(T-\lambda I)(\H)\subseteq \sp(A)$.  Since $\H$ is $\omega_1$-saturated, we know that $(T-\lambda I)(B_1(\H))$ is closed.  We thus need to show that $(T-\lambda I)(B_1(\H))$ is compact.  Let $\epsilon>0$ be given.  Set $m:=m(1,T)$.  Let $(a_n)$ be a countable dense subset of $(T-\lambda I)(B_1(\H))$.  Let $k:=\max(|\lambda|,m)$.  Let $x$ range over variables of sort $B_1(\H)$ and $y$ range over variables of sort $B_{2k}(H)$.  Let $\varphi(x,y)$ be a formula such that $\big|\|T(x)-y\|-\varphi(x,y)\big|<\frac{\epsilon}{4}$.  Then the following set of formulae is inconsistent:
$$\{\varphi(x,\lambda x+a_n)\geq \frac{\epsilon}{2} \ | \ n\in \n\}.$$  By saturation, there are $a_1,\ldots,a_n$ such that $a_1,\ldots,a_n$ form an $\epsilon$-net for $(T-\lambda I)(B_1(\H))$.
\end{proof}

This finishes the proof of Theorem \ref{main}.  Let us now consider some of its consequences.

\begin{cor}\label{cstar}
$\D(H)$ is a $C^*$-subalgebra of $\mathfrak{B}(H)$.
\end{cor}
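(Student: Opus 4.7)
The plan is to leverage the reformulation of Theorem \ref{main} as $\D(H) = \pi^{-1}(\r)$, which reduces the statement about definability to a statement about the Calkin algebra, and then verify the three standard requirements for being a $C^*$-subalgebra of $\mathfrak{B}(H)$: subalgebra structure, norm-closedness, and closure under adjoints.

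First, I would dispatch the purely algebraic part. Since any composition, sum, or scalar multiple of linear operators is linear, Lemma \ref{algebra} immediately gives that $\D(H)$ is closed under these three operations, so $\D(H)$ is a subalgebra of $\mathfrak{B}(H)$. (It contains $I = 1 \cdot I + 0$, so it is unital.) This step uses nothing beyond what is already in the preliminaries.

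Second, I would establish norm-closedness by arguing inside the Calkin algebra. Since $\mathfrak{B}_0(H)$ is a closed two-sided ideal of $\mathfrak{B}(H)$, the quotient map $\pi:\mathfrak{B}(H)\to \mathfrak{C}(H)$ is a contractive (in particular, continuous) $*$-homomorphism. The subspace $\r \cdot e \subseteq \mathfrak{C}(H)$ is one-dimensional and therefore closed, so its preimage $\pi^{-1}(\r) = \D(H)$ is closed in the operator norm topology on $\mathfrak{B}(H)$. This is the step that genuinely requires Theorem \ref{main}; without the structure theorem one would have no handle on norm limits of definable operators.

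Third, I would verify closure under adjoints. Given $T \in \D(H)$, write $T = \lambda I + K$ with $\lambda \in \r$ and $K \in \mathfrak{B}_0(H)$; then $T^* = \lambda I + K^*$, and Schauder's theorem (that the adjoint of a compact operator is compact) gives $K^* \in \mathfrak{B}_0(H)$, so $T^* \in \D(H)$. (In the complex case one would have $T^* = \bar\lambda I + K^*$, but here the scalars are real.) There is no serious obstacle in the proof: the main theorem does essentially all the work, and what remains are three short observations. If anything, the one point worth being careful about is ensuring that the subalgebra structure of $\D(H)$ from Lemma \ref{algebra} is recorded at the level of bounded linear operators rather than arbitrary definable functions, but this is immediate since the operations involved preserve linearity.
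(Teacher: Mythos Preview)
Your proof is correct and matches the paper's approach: the paper gives no explicit proof of this corollary, treating it as immediate from the identification $\D(H)=\pi^{-1}(\r)$ established in Theorem~\ref{main}, and your argument simply unpacks the three standard verifications (subalgebra, norm-closed, $*$-closed) that this identification makes automatic. One minor remark: once you invoke $\D(H)=\pi^{-1}(\r)$, you no longer need Lemma~\ref{algebra} for the subalgebra step either, since $\r\cdot e$ is a $*$-closed subalgebra of $\mathfrak{C}(H)$ and $\pi$ is a $*$-homomorphism, so the preimage inherits all the structure at once; but your mixed route is equally valid.
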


The preceding corollary is interesting because it is not at all clear, from first principles, that $\D(H)$ is closed under taking adjoints.  However, it is easy to see that the adjoint of a definable \emph{normal} operator $T:H\to H$ is definable, for we then have
$$\|T^*(x)-y\|^2=\|T^*(x)\|^2-2\langle T^*(x),y\rangle +\|y\|^2=\|T(x)\|^2-2\langle T(y),x\rangle +\|y\|^2,$$ which is a definable predicate since $T$ is definable.

\begin{cor}\label{fin}
Suppose that $T\in \D(H)$ is not compact.  Then $\ker(T)$ and $\coker(T)$ are finite-dimensional.  Moreover, $\ker(T)\subseteq \sp(A)$.
\end{cor}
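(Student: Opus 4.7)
The plan is to reduce the corollary to classical spectral and Fredholm theory for compact operators, using Theorem \ref{main} and Proposition \ref{lambda} as the two essential inputs. By Theorem \ref{main}, I can write $T = \lambda I + K$ with $\lambda \in \r$ and $K$ compact; since $T$ is assumed not compact we must have $\lambda \neq 0$, as otherwise $T = K$ would itself be compact. This $\lambda$ necessarily coincides with $\lambda(T)$ from Proposition \ref{lambda}, since in infinite dimensions no nonzero scalar multiple of $I$ lies in $\mathfrak{B}_0(H)$, so the constant $\mu$ with $T - \mu I$ compact is unique.

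For the finite-dimensionality assertions I would appeal to Riesz--Schauder theory for the compact operator $K$. The kernel $\ker(T) = \{x \in H : Kx = -\lambda x\}$ is exactly the eigenspace of $K$ at the nonzero eigenvalue $-\lambda$, hence finite-dimensional. For $\coker(T)$, I would invoke the standard Fredholm fact that $\lambda I + K$ is Fredholm of index zero whenever $\lambda \neq 0$ and $K$ is compact; in particular $T(H)$ is closed and of finite codimension. (Equivalently, one can argue via the adjoint: $T^{*} = \lambda I + K^{*}$ with $K^{*}$ compact, so $\ker(T^{*})$ is finite-dimensional by the same eigenspace argument, and $\coker(T) \cong \ker(T^{*})$ once one knows $T(H)$ is closed.)

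For the containment $\ker(T) \subseteq \sp(A)$, I would apply the identity from Proposition \ref{lambda}, which on restriction from $\H$ to $H$ still reads $Tx = P(Tx) + \lambda(x - Px)$ for every $x \in H$. If $Tx = 0$, then $0 = P(Tx) + \lambda(x - Px)$; the first summand lies in $\sp(A)$ and the second in $\sp(A)^{\perp}$, so orthogonality forces both to vanish, and $\lambda \neq 0$ then gives $x = Px \in \sp(A)$. The only real obstacle here is pinning down the correct classical Fredholm statement for the cokernel claim; everything else is an essentially mechanical consequence of Theorem \ref{main} and Proposition \ref{lambda}.
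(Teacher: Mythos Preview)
Your argument is correct, and the use of Proposition \ref{lambda} to get $\ker(T)\subseteq\sp(A)$ matches the paper exactly. The route to finite-dimensionality, however, is genuinely different. The paper does \emph{not} invoke Riesz--Schauder or the Fredholm alternative at this point; instead it gives a model-theoretic compactness argument: knowing already that $\ker(T)\subseteq\sp(A)$, it fixes a countable dense subset $(a_i)$ of $\sp(A)$, observes that for each $\epsilon>0$ the partial type expressing ``$x\in\ker(T)$, $\|x\|\leq 1$, and $d(x,a_i)\geq\epsilon$ for all $i$'' is unsatisfiable in the $\omega_1$-saturated extension $\H$, and extracts from saturation a finite $\epsilon$-net for the unit ball of $\ker(T)$. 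Finite-dimensionality of $\coker(T)$ is then obtained by applying the same conclusion to $T^*$, which is definable by Corollary \ref{cstar}. Your approach is shorter and stays entirely within classical operator theory, essentially anticipating what the paper later does in Corollary \ref{Fred} via the Fredholm alternative; the paper's approach, by contrast, keeps the argument internal to the continuous-logic framework and avoids importing the Riesz--Schauder machinery.
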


\begin{proof}
Suppose that $T$ is not a compact operator.  Then $\lambda(T)\not=0$ and we have, by Proposition \ref{lambda}, that $\ker(T)\subseteq \sp(A)$.  Let $m:=m(1,T)$.  Let $x$ and $y$ range over $B_1(\H)$ and $B_m(\H)$ respectively.  For each $k\geq 1$, let $\varphi_k(x,y)$ be a formula such that $|d(T(x),y)-\varphi_k(x,y)|<\frac{1}{k}$ for all $x$ and $y$.  Let $(a_i)$ be a countable dense subset of $\sp(A)$.  Fix $\epsilon>0$.  Then the set of conditions
$$\{\varphi_k(x,0)\leq \frac{1}{k} \ | \ k\geq 1\}\cup \{d(x,a_i)\geq \epsilon \ | \ i\geq 1\}$$ is unsatisfiable.  By $\omega_1$-saturation, there are $a_1,\ldots,a_k$ which form an $\epsilon$-net for the unit ball of $\ker(T)$.  Since $\epsilon>0$ was arbitrary, this shows that the unit ball of $\ker(T)$ is compact, whence $\ker(T)$ is finite-dimensional.  Since $T^*$ is also definable, we see that $\ker(T^*)$ is also finite-dimensional, implying that $\coker(T)$ is finite-dimensional.
\end{proof}






\begin{cor}\label{proj}
Suppose that $K$ is a closed subspace of $H$ and $T:H\to H$ is the orthogonal projection onto $K$.  Then $T$ is definable if and only if $K$ is of finite dimension or finite codimension.
\end{cor}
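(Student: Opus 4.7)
The plan is to deduce both implications from Theorem \ref{main}, using the key algebraic fact that $T$ is idempotent ($T^2 = T$). The work splits naturally into a short ``if'' direction built out of Proposition \ref{cpt} and an ``only if'' direction built out of the Calkin-algebra reformulation of Theorem \ref{main}.

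For the ``if'' direction I would argue as follows. If $\dim K < \infty$, then $T$ is a finite-rank operator, hence compact, so definable by Proposition \ref{cpt}(2). If instead $K$ has finite codimension, then $I-T$ is the orthogonal projection onto the finite-dimensional subspace $K^\perp$ and is therefore a finite-rank (in particular compact) operator; writing $T = 1 \cdot I + (-(I-T))$ then exhibits $T$ as scalar-plus-compact, and Theorem \ref{main} yields definability.

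For the ``only if'' direction, assume $T$ is definable and invoke Theorem \ref{main} to write $T = \lambda I + K'$ with $\lambda \in \r$ and $K' \in \mathfrak{B}_0(H)$. Applying the Calkin quotient $\pi \co \mathfrak{B}(H) \to \mathfrak{C}(H)$ to the identity $T^2 = T$ gives $(\lambda e)^2 = \lambda e$ in $\mathfrak{C}(H)$, i.e., $\lambda^2 = \lambda$, so $\lambda \in \{0, 1\}$. If $\lambda = 0$, then $T$ itself is compact; since $T$ is the orthogonal projection onto $K$, the closed unit ball of $K$ equals $T(B_1(K)) \subseteq \overline{T(B_1(H))}$, which is compact, and hence $\dim K < \infty$. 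If $\lambda = 1$, then $I - T = -K'$ is compact, and since $I - T$ is the orthogonal projection onto $K^\perp$, the case $\lambda = 0$ applied to $I - T$ forces $\dim K^\perp < \infty$, i.e., $K$ has finite codimension.

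There is no significant obstacle here once Theorem \ref{main} is in hand: the only conceptual step is the observation that $T^2 = T$ collapses the scalar $\lambda$ into $\{0,1\}$ via the Calkin-algebra quotient, after which everything reduces to the standard fact that the closed unit ball of a normed space is compact iff the space is finite-dimensional.
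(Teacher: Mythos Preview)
Your proof is correct. The ``if'' direction matches the paper's essentially verbatim. For the ``only if'' direction you take a slightly different route: you push the idempotence $T^2=T$ through the Calkin quotient to force $\lambda\in\{0,1\}$, and then handle both cases symmetrically by showing that a compact orthogonal projection has finite-dimensional range. The paper instead splits into ``$T$ compact'' versus ``$T$ not compact'' and, in the non-compact case, invokes Corollary~\ref{fin} to conclude that $\ker(T)=K^\perp$ is finite-dimensional. Your argument is a bit more self-contained (it does not need Corollary~\ref{fin}), while the paper's is shorter given that Corollary~\ref{fin} is already available; both rest on the same underlying ideas, namely Theorem~\ref{main} together with the idempotence of $T$.
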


\begin{proof}
If $K$ is of finite dimension or finite codimension, then $T$ or $I-T$ is finite-rank, whence definable.  Conversely, suppose that $T$ is definable.  If $T$ is compact, then $T$ is finite-rank (as it is idempotent), whence $K$ is finite-dimensional.  Otherwise, by Corollary \ref{fin}, we have $$\dim(H/K)=\dim(K^\perp)=\dim(\ker(T))<\infty.$$
\end{proof}

\noindent In this paper, we let $\ell^2_\r$ (resp. $\ell^2_\c$) denote the real (resp. complex) Hilbert space of all real (resp. complex) square-summable sequences indexed by $\n$.

\begin{cor}\label{l2}
Let $I=\{i_1,i_2,\ldots,\}$ be an infinite and co-infinite subset of $\n$ and let $T:\ell^2_\r\to \ell^2_\r$ be defined by $T(x)_n=x_{i_n}$.  Then $T$ is not definable.
\end{cor}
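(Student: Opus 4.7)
The plan is to apply Theorem \ref{main} and derive a contradiction by showing that $T$ is not of the form $\lambda I + K$ with $K$ compact. So suppose toward a contradiction that $T = \lambda I + K$ for some $\lambda \in \r$ and some compact operator $K$ on $\ell^2_\r$.

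First I would describe the action of $T$ on the standard orthonormal basis $(e_j)_{j \in \n}$ of $\ell^2_\r$. Unwinding the definition, $T(e_j) = e_n$ when $j = i_n \in I$, and $T(e_j) = 0$ when $j \notin I$. This immediately shows $T$ is not compact, since the subsequence $\bigl(T(e_{i_n})\bigr) = (e_n)$ is an orthonormal sequence in the image of the unit ball and has no convergent subsequence.

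Next I would pin down the scalar $\lambda$. Since $I$ is co-infinite, there is an infinite sequence $j_1 < j_2 < \cdots$ of indices not in $I$, and for each such $j_k$ we have $K(e_{j_k}) = T(e_{j_k}) - \lambda e_{j_k} = -\lambda e_{j_k}$. If $\lambda \neq 0$, then $(K(e_{j_k}))$ is an orthogonal sequence of vectors each of norm $|\lambda|$, so it has no convergent subsequence, contradicting the compactness of $K$. Hence $\lambda = 0$, which forces $T = K$ to be compact, contradicting the previous paragraph. Thus $T$ is not a scalar-plus-compact operator and, by Theorem \ref{main}, $T$ is not definable.

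The proof is almost entirely routine once Theorem \ref{main} is in hand; the only mild subtlety is using co-infiniteness of $I$ to rule out a nonzero scalar part, which is needed because without it $\lambda = 0$ could not be deduced (e.g., the identity map corresponds to $I = \n$).
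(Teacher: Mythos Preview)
Your proof is correct. Both your argument and the paper's hinge on the same two observations: $T$ is not compact, and $T$ has infinite-dimensional kernel (coming from the co-infinitely many $e_j$ with $j\notin I$). The paper packages the second step by invoking Corollary~\ref{fin}, which says that a noncompact definable operator must have finite-dimensional kernel; you instead work directly from Theorem~\ref{main}, using the kernel vectors to force $\lambda=0$ and reach the same contradiction. Your route is slightly more self-contained, while the paper's is a one-line application of an already-established corollary; substantively they are the same argument.
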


\begin{proof}
Observe that $T(B_1(\ell^2_\r))=B_1(\ell^2_\r)$, so $T$ is not a compact operator.  Since $\ker(T)$ is infinite-dimensional, $T$ cannot be definable by Corollary \ref{fin}.
\end{proof}



\begin{cor}\label{eigen}
Suppose that $T:H\to H$ is a definable linear operator and $\mu$ is an eigenvalue of $T$ satisfying $\mu\not=\lambda(T)$.  Then the eigenspace $E_\mu(T)$ is a finite-dimensional subspace of $\sp(A)$.
\end{cor}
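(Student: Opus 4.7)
The plan is to reduce this corollary to a direct application of Corollary \ref{fin} applied to the operator $T - \mu I$. First I would note that since $T$ is $A$-definable, the identity $I$ is (trivially) definable, and Lemma \ref{algebra} guarantees that sums and scalar multiples of $A$-definable operators are $A$-definable, so $T - \mu I$ is itself an $A$-definable linear operator. Then observe that $E_\mu(T)$ is by definition exactly $\ker(T - \mu I)$.

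Next I would check that $T - \mu I$ is not a compact operator, which is the hypothesis required to invoke Corollary \ref{fin}. By Theorem \ref{main}, we can write $T = \lambda(T) I + K$ for some compact $K$, whence
\[
T - \mu I = (\lambda(T) - \mu) I + K.
\]
Since $\mu \neq \lambda(T)$, the coefficient $\lambda(T) - \mu$ is nonzero. If $T - \mu I$ were compact, then $(\lambda(T) - \mu) I = (T - \mu I) - K$ would be a compact operator, and hence so would $I$; but $H$ is infinite-dimensional, so $I$ is not compact. Thus $T - \mu I$ is a non-compact definable operator.

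Applying Corollary \ref{fin} to $T - \mu I$, we conclude that $\ker(T - \mu I) = E_\mu(T)$ is finite-dimensional and contained in $\sp(A)$, which is exactly what we want. I do not anticipate any real obstacle here: the work has already been done in Theorem \ref{main} and Corollary \ref{fin}, and this corollary is essentially a formal consequence, with the only subtle point being the observation that scalar-plus-compact with nonzero scalar cannot itself be compact on an infinite-dimensional space.
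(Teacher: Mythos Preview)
Your proof is correct and takes a somewhat different route from the paper's. The paper argues directly from Proposition~\ref{lambda}: writing $T(z)=P(T(z))+\lambda(z-Pz)$ and combining with $T(z)=\mu z$ gives $(\mu-\lambda)z\in\sp(A)$, so $z\in\sp(A)$; finite-dimensionality is then obtained from the Spectral Theorem for compact operators applied to $T-\lambda I$, using $E_\mu(T)=E_{\mu-\lambda}(T-\lambda I)$. Your argument instead packages everything into a single invocation of Corollary~\ref{fin} for the operator $T-\mu I$, after checking via Lemma~\ref{algebra} that this operator is $A$-definable and via Theorem~\ref{main} that it is not compact. Your route is cleaner and more modular, since it avoids repeating the $\sp(A)$-inclusion computation already done inside Corollary~\ref{fin} and replaces the appeal to the Spectral Theorem by the saturation argument already carried out there; the paper's route has the minor advantage of not needing to track definability through algebraic operations. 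Both are perfectly valid.
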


\begin{proof}
Set $\lambda:=\lambda(T)$.  Fix $\mu\not=\lambda$ and suppose that $z\not=0$ is such that $T(z)=\mu z$.  We know that $T(z)=P(T(z))+\lambda(z-Pz)$.  Thus $$(\mu-\lambda) z=P(T(z))-\lambda Pz\in \sp(A),$$ whence $z\in \sp(A)$.  Thus $E_\mu(T)$ is contained in $\sp(A)$.  
Now observe that $\mu-\lambda$ is a nonzero eigenvalue of $T-\lambda I$; since $T-\lambda I$ is compact, $E_{\mu-\lambda}(T-\lambda I)$ is finite-dimensional by the Spectral Theorem for Compact Operators (see \cite{Con}, VII.7.1).  Now use the fact that $E_\mu(T)=E_{\mu-\lambda}(T-\lambda I)$.
\end{proof}

In particular, if $T:H\to H$ is an $A$-definable linear operator, where $\sp(A)$ is finite-dimensional, then $T$ has only finitely many eigenvalues.

\section{Definable Operators on Complex Hilbert Spaces}

In this section, we let $H$ be an infinite-dimensional \emph{complex} Hilbert space.  We treat $H$ as a metric structure just as in the case of real Hilbert spaces except for two important differences.  First, in addition to all of the function symbols for scalar multiplication by real numbers, we include, for each $n\geq 1$, a function symbol $i\cdot:B_n(H)\to B_n(H)$ for scalar multiplication by $i$.  Secondly, for each $n\geq 1$, we replace the predicate symbol for the inner product by two predicate symbols $\Re,\Im:B_n(H)\times B_n(H)\to [-n^2,n^2]$, which are to be interpreted as the real and imaginary parts of the inner product.  

In this signature, it is still true that definable closure in $H$ coincides with closed linear span in $H$.  Moreover, it is straightforward to verify that all of the results from Section 2 as well as all of the results leading up to the proof of Theorem \ref{main} remain true in the complex context.  For example, consider the finite-rank operator $T:H\to H$ given by $T(x)=\sum_{i=1}^n \langle x,z_i\rangle e_i$, where $\{e_1,\ldots,e_n\}$ is an orthonormal set in $H$ and $z_1,\ldots,z_n\in H$ are arbitrary.  Then we have
$$d(T(x),y)=\sqrt{\sum_{i=1}^n \big(|\langle x,z_i\rangle| ^2-\langle x,z_i\rangle \langle e_i,y\rangle -\langle z_i,x\rangle \langle y,e_i\rangle\big)+\|y\|^2}.$$  Now $|\langle x,z_i\rangle|^2=\Re(x,z_i)^2+\Im(x,z_i)^2$ and $$\langle x,z_i\rangle \langle e_i,y\rangle+\langle z_i,x\rangle \langle y,e_i\rangle=2(\Re(x,z_i)\Re(e_i,y)-\Im(x,z_i)\Im(e_i,y)).$$  It thus follows that $d(T(x),y)$ is once again given by a formula.  Performing similar modifications to the rest of the above arguments yields a complex version of our main theorem:

\begin{thm}
A bounded linear operator $T:H\to H$ is definable if and only if there exists $\lambda\in \mathbb{C}$ and a compact operator $K:H\to H$ such that $T=\lambda I+K$.
\end{thm}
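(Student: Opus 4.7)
The plan is to mirror the proof of Theorem \ref{main} step by step in the complex signature, making the two substitutions the author has already advertised: rewriting inner-product expressions via $\Re$ and $\Im$, and replacing real scalars with complex scalars throughout. Everything from Section 2 and Lemma \ref{algebra} transfers without incident once one also records that multiplication by $i$ is now a function symbol, so the real work is to adapt the proofs of Proposition \ref{cpt}, Proposition \ref{lambda}, and the subsequent compactness statement.

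For the ``if'' direction, I would start from the formula for $d(T(x),y)$ displayed just above the theorem statement, which exhibits every finite-rank operator $T(x) = \sum_{i=1}^n \langle x, z_i \rangle e_i$ as definable (using that both $\Re$ and $\Im$ of inner products are predicate symbols). A compact operator is the uniform operator-norm limit of finite-rank operators, so the estimate $\big|\|T(x) - y\| - \|T_N(x) - y\|\big| < \epsilon$ from the proof of Proposition \ref{cpt}(2) goes through unchanged, exhibiting $\|T(x) - y\|$ as a definable predicate. Since any $\lambda \in \c$ is obtained from real scalars and the symbol $i\cdot$, the operator $\lambda I$ is definable, and Lemma \ref{algebra} then produces $\lambda I + K$ for every compact $K$.

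For the ``only if'' direction, fix a countable $A$ with $T$ being $A$-definable and pass to a proper $\omega_1$-saturated elementary extension $\H$. The proof that $T : \H \to \H$ is $\c$-linear copies the additive argument verbatim and adds one more saturation-style implication to handle preservation of multiplication by $i$. The decomposition lemma becomes $\sp(A \cup \{x\}) = \sp(A) \oplus \c \cdot (x - Px)$, so for $x \in \sp(A)^\perp$ one has $T(x) - P(T(x)) = \lambda_x x$ for some $\lambda_x \in \c$. Comparing the decompositions of $T(x)$, $T(y)$, and $T(x+y)$ for $\c$-linearly independent $x, y \in \sp(A)^\perp$ --- such pairs exist since $\sp(A)^\perp$ is infinite-dimensional over $\c$ by $\omega_1$-saturation --- forces $\lambda_x$ to be a single constant $\lambda(T) \in \c$, and the decomposition $x = Px + (x-Px)$ extends the identity $T = P \circ T + \lambda(T) I - \lambda(T) P$ to all of $\H$. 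The saturation argument producing a finite $\epsilon$-net in $(T - \lambda(T) I)(B_1(\H))$ then transfers literally, showing that $T - \lambda(T) I$ is compact and completing the proof.

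The only ``obstacle'' is bookkeeping: one must carefully restate inner-product formulae using $\Re$ and $\Im$ (the author's displayed rewrite just before the theorem is the template), and one must replace $\r$ by $\c$ in the linear-independence step of Proposition \ref{lambda} --- noting that $\c$-linear independence, rather than mere $\r$-linear independence, is what the argument requires. No conceptually new idea appears beyond what is already present in the real case.
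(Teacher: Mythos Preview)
Your proposal is correct and matches the paper's own approach exactly: the paper's proof consists of the single sentence ``Performing similar modifications to the rest of the above arguments yields a complex version of our main theorem,'' and you have simply spelled out those modifications in detail. If anything, your handling of Proposition~\ref{lambda} (requiring $\c$-linear independence and noting that $\sp(A)^\perp$ is infinite-dimensional by saturation) is slightly more careful than the original real argument.
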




We once again write $\D(H)$ for the algebra of definable operators.  Observe that we have complex versions of Corollaries \ref{cstar} through \ref{eigen}.  In addition, in the complex context, we may draw a few more conclusions from our result on definable operators, which we discuss now.  

Recall that a bounded operator $T:H\to H$ is said to be \emph{Fredholm} if both $\ker(T)$ and $\coker(T)$ are finite-dimensional.  If $T$ is Fredholm, then the \emph{index} of $T$ is the integer $\operatorname{ind}(T):=\dim \ker(T)-\dim \coker(T).$

\begin{cor}\label{Fred}
If $T\in \D(H)$, then either $T$ is compact or else $T$ is a Fredholm operator of index $0$.  In the latter case, we have that $\ker(T)$ is a finite-dimensional subspace of $\sp(A)$.
\end{cor}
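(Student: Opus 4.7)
The plan is to reduce everything to the complex version of Theorem \ref{main}, which writes $T = \lambda I + K$ for some $\lambda \in \c$ and some compact operator $K$, and then split into two cases according to whether $\lambda = 0$.

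If $\lambda = 0$, then $T = K$ is compact, so we are done. Assume $\lambda \neq 0$. To see that $T$ is Fredholm, I would directly invoke the complex analogue of Corollary \ref{fin}: the hypothesis $\lambda(T) = \lambda \neq 0$ means $T$ is not compact, so that corollary tells us both $\ker(T)$ and $\coker(T)$ are finite-dimensional, and further that $\ker(T) \subseteq \sp(A)$. This establishes the Fredholm property and the last sentence of the statement simultaneously.

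It remains to show $\operatorname{ind}(T) = 0$. For this I would appeal to the standard stability result from Fredholm theory: if $S$ is a Fredholm operator on $H$ and $K$ is a compact operator on $H$, then $S + K$ is Fredholm with $\operatorname{ind}(S+K) = \operatorname{ind}(S)$ (see, e.g., \cite{Con}, XI.3.11). Applying this with $S = \lambda I$, which is invertible (since $\lambda \neq 0$) and therefore Fredholm of index $0$, yields
\[
\operatorname{ind}(T) = \operatorname{ind}(\lambda I + K) = \operatorname{ind}(\lambda I) = 0.
\]

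There is essentially no genuine obstacle here, since the main theorem and Corollary \ref{fin} do all the nontrivial work; the only delicate point is that the complex scalar field is needed in two distinct places: once so that Theorem \ref{main} produces a \emph{complex} scalar $\lambda$ (real scalars would not in general be available, which is why this corollary is stated only in the complex setting), and once implicitly in the application of the index-stability theorem. The argument is therefore short, but the statement is a genuinely complex phenomenon.
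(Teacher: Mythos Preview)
Your argument is correct and is essentially the paper's own proof: the paper invokes the Fredholm alternative (\cite{Con}, VII.7.9 and XI.3.3) to obtain ``Fredholm of index $0$'' in one stroke and then cites Proposition~\ref{lambda} for $\ker(T)\subseteq\sp(A)$, while you route the same facts through Corollary~\ref{fin} together with the index-stability theorem---these are the same ingredients, merely packaged differently. One small correction to your closing commentary: this is not a genuinely complex phenomenon, since both Theorem~\ref{main} (with $\lambda\in\r$) and the Fredholm/index-stability theory hold over real Hilbert spaces as well; the corollary's placement in the complex section is organizational rather than mathematically forced.
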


\begin{proof}
The first statement follows from the Fredholm alternative from functional analysis; see \cite{Con}, VII.7.9 and XI.3.3.  If $T$ is Fredholm, then the fact that $\ker(T)\subseteq \sp(A)$ follows directly from Proposition \ref{lambda}.
\end{proof}

\noindent Let $\mathbb{F}$ denote either $\r$ or $\c$.  Recall the \emph{left-} and \emph{right-shift operators} $L_\mathbb{F}$ and $R_\mathbb{F}$ on $\ell^2_\mathbb{F}$:
$$L_\mathbb{F}:\ell^2_\mathbb{F}\to\ell^2_\mathbb{F}, \quad L_\mathbb{F}(x_0,x_1,x_2,\ldots)=(x_1,x_2,x_3,\ldots),$$
$$R_\mathbb{F}:\ell^2_\mathbb{F}\to \ell^2_\mathbb{F}, \quad R_\mathbb{F}(x_0,x_1,x_2,\ldots)=(0,x_0,x_1,x_2,\ldots).$$  
\begin{cor}
The left and right shift operators $L_\c,R_\c:\ell^2_\c\to \ell^2_\c$ are not definable.  Consequently, the left- and right-shift operators $L_\r,R_\r:\ell^2_\r\to \ell^2_\r$ are not definable. 
\end{cor}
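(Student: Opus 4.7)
The plan is to reduce the complex case to Corollary~\ref{Fred} and then transfer to the real case by complexification.

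For $L_\c$, I would first show that $L_\c$ is not compact, noting that $L_\c e_n = e_{n-1}$ for $n\geq 1$, so $L_\c(B_1(\ell^2_\c))$ contains the orthonormal set $\{e_n : n\geq 0\}$ and hence fails to be totally bounded. Next I would compute the Fredholm data: $\ker(L_\c)=\c\cdot e_0$ is one-dimensional, and $L_\c$ is surjective (any $y=(y_0,y_1,\ldots)\in\ell^2_\c$ equals $L_\c(z)$ for $z=(0,y_0,y_1,\ldots)\in\ell^2_\c$), so $L_\c$ is Fredholm of index $1\neq 0$. Corollary~\ref{Fred} then rules out definability. The same recipe handles $R_\c$: it is an isometry (hence not compact), has trivial kernel, and its image is the orthogonal complement of $\c\cdot e_0$, yielding index $-1\neq 0$.

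For the real shifts $L_\r,R_\r$, the plan is to argue by contradiction via complexification. Suppose $L_\r$ were definable; then by Theorem~\ref{main}, $L_\r=\lambda I+K$ for some $\lambda\in\r$ and compact $K:\ell^2_\r\to\ell^2_\r$. Identifying $\ell^2_\c$ with $\ell^2_\r\oplus i\ell^2_\r$, I would observe that the complexification $(L_\r)_\c(x+iy)=L_\r(x)+iL_\r(y)$ is precisely $L_\c$ (since the shift acts coordinatewise and so commutes with taking real and imaginary parts), while $(\lambda I+K)_\c=\lambda I+K_\c$, with $K_\c$ compact on $\ell^2_\c$. Hence $L_\c=\lambda I+K_\c$, which by the complex analog of Theorem~\ref{main} would place $L_\c$ in $\D(\ell^2_\c)$, contradicting the previous paragraph. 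The case of $R_\r$ is entirely analogous (replace $L$ by $R$ throughout).

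The hard part is really just the bookkeeping of the complexification step: one needs that complexification of a compact operator remains compact and that it intertwines $L_\r$ with $L_\c$ (and similarly for $R$). Both are standard functional-analytic facts. The decisive content of the proof is the Fredholm-index computation for the complex shifts, which converts Corollary~\ref{Fred} into a direct obstruction to definability.
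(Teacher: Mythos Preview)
Your proposal is correct and follows essentially the same approach as the paper: the complex shifts are ruled out via Corollary~\ref{Fred} using their Fredholm indices $\pm 1$, and the real shifts are handled by complexification to reduce to the complex case. You supply more explicit detail (direct verification of non-compactness and computation of kernels and cokernels) where the paper simply asserts the index values, but the logical structure is identical.
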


\begin{proof}
$L_\c$ and $R_\c$ are Fredholm operators of index $1$ and $-1$ respectively, whence not definable.  If $L_\r$ were definable, then there would be $\lambda \in \r$ and a compact operator $K:\ell^2_\r\to \ell^2_\r$ such that $L_\r=\lambda I+K$.  Let $K^\c$ denote the canonical extension of $K$ to a $\c$-linear map on $\ell^2_\c$; observe that $K^\c$ is a compact operator.  Then $L_\c=\lambda I+K^\c$, which is a scalar plus compact operator on $\ell^2_\c$, implying that $L_\c$ is definable, a contradiction.  The same reasoning shows that $R_\r$ is not definable.
\end{proof}

\noindent As above, we let $\mathfrak{C}(H)$ denote the Calkin algebra of $H$ with identity element $e$ and we let $\pi:\mathfrak{B}(H)\to \mathfrak{C}(H)$ denote the canonical quotient map onto the Calkin algebra of $H$.  Given $T\in \mathfrak{B}(H)$, recall that the \emph{essential spectrum} of $T$ is $$\sigma_e(T):=\{\lambda \in \c \ | \ \pi(T)-\lambda e \text{ is not invertible}\}.$$  The following result is clear from our main theorem.

\begin{cor}\label{ess}
If $T\in \D(H)$, then $\sigma_e(T)=\{\lambda(T)\}$.
\end{cor}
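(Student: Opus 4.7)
The plan is to use the complex version of the main theorem directly: if $T \in \D(H)$, then $T = \lambda(T)I + K$ for some compact operator $K$, so the quotient map $\pi$ collapses $T$ to a scalar multiple of the identity in $\mathfrak{C}(H)$, and the essential spectrum can then be read off immediately.

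More concretely, I would first note that since $\mathfrak{B}_0(H) = \ker(\pi)$, we have $\pi(K) = 0$, and hence
$$\pi(T) = \lambda(T)\pi(I) + \pi(K) = \lambda(T) e.$$
Then for any $\mu \in \c$,
$$\pi(T) - \mu e = (\lambda(T) - \mu)\, e.$$
Since $e$ is the unit of the unital algebra $\mathfrak{C}(H)$, the element $(\lambda(T) - \mu)\, e$ is invertible in $\mathfrak{C}(H)$ if and only if $\lambda(T) - \mu \neq 0$. Thus $\mu \in \sigma_e(T)$ precisely when $\mu = \lambda(T)$, giving $\sigma_e(T) = \{\lambda(T)\}$.

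There is essentially no obstacle here: the entire content of the corollary is packaged into the statement $\D(H) = \pi^{-1}(\c \cdot e)$ provided by the main theorem, together with the trivial observation that the spectrum of a scalar in a unital Banach algebra is the singleton consisting of that scalar. The only thing worth being careful about is the identification of $\c$ as a subalgebra of $\mathfrak{C}(H)$ via $\lambda \mapsto \lambda e$, which the paper has already set up in the discussion following Theorem \ref{main}.
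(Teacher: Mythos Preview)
Your proof is correct and is exactly the argument the paper has in mind; the paper simply asserts that the result is clear from the main theorem without spelling out the details. Your computation $\pi(T)=\lambda(T)e$ and the identification of the spectrum of a scalar in a unital Banach algebra make explicit what the paper leaves implicit.
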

 
\begin{ex}
Consider the operator $L_\c\oplus R_\c\in \mathfrak{B}(\ell^2_\c\oplus \ell^2_\c)$.  Then $L_\c\oplus R_\c$ is Fredholm of index $0$ by XI.2.2 and X1.3.10 of \cite{Con}.  Thus, Corollary \ref{Fred} does not rule out the possibility that $L_\c\oplus R_\c$ is definable.  However, XI.4.11 of \cite{Con} shows that $\sigma_e(L_\c\oplus R_\c)=\{z\in \c \ | \ |z|=1\}$, whence Corollary \ref{ess} shows that $L_\c\oplus R_\c$ is not definable.
\end{ex}

Recall the \emph{invariant subspace problem for Hilbert spaces}:  Let $H$ be the \emph{separable} complex Hilbert space.  Given $T\in \mathfrak{B}(H)$, does there exist a nontrivial closed subspace $E$ of $H$ such that $T(E)\subseteq E$?  Here, by a nontrivial subspace of $H$, we mean a subspace of $H$ other than $\{0\}$ and $H$.  While this problem remains open, we do know that the answer is positive if one restricts attention to definable bounded operators:

\begin{cor}
Suppose that $H$ is the separable complex Hilbert space.  Then given any $T\in \D(H)$, there is a nontrivial closed subspace $E$ of $H$ such that $T(E)\subseteq E$.
\end{cor}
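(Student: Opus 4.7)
The plan is to reduce the problem to a classical fact about compact operators. By the main theorem of this section, any $T\in\D(H)$ has the form $T=\lambda I+K$ for some $\lambda\in\c$ and some compact operator $K:H\to H$. The crucial and trivial observation is that a closed subspace $E\subseteq H$ is invariant under $T$ if and only if it is invariant under $K=T-\lambda I$, since scalar multiples of the identity preserve every subspace. Thus it suffices to find a nontrivial closed invariant subspace for $K$.

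I would split into two cases. If $K=0$, so $T=\lambda I$, then any one-dimensional subspace of $H$ (which exists because $H$ is nonzero) is a nontrivial closed $T$-invariant subspace. If $K\ne 0$, then $K$ is a nonzero compact operator on the separable infinite-dimensional complex Hilbert space $H$, and I would invoke the Aronszajn--Smith theorem, which asserts that every nonzero compact operator on an infinite-dimensional complex Banach space admits a nontrivial closed invariant subspace. (Alternatively one can invoke Lomonosov's theorem, or, in the special case where $K$ has some nonzero spectral value, the Spectral Theorem for Compact Operators, since any nonzero spectral value is then an eigenvalue whose eigenspace is a nontrivial finite-dimensional invariant subspace.)

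There is no real obstacle here beyond citing the right functional-analytic input; the whole content of the corollary is the reduction $T\mapsto K$ supplied by Theorem \ref{main}, together with the fact that compact operators are precisely the class for which the invariant subspace problem is known to have a positive answer. The corollary therefore follows in a short paragraph: write $T=\lambda(T)I+K$ using the main theorem, note the equivalence of $T$-invariance and $K$-invariance of closed subspaces, and conclude via Aronszajn--Smith (treating the degenerate case $K=0$ separately).
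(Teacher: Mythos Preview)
Your proof is correct and follows the same overall strategy as the paper: decompose $T=\lambda I+K$ via the main theorem, dispose of the case $K=0$ with a one-dimensional subspace, and then appeal to a classical invariant-subspace result for the case $K\neq 0$.

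There is one small difference in the final step worth noting. You reduce $T$-invariance to $K$-invariance (since $\lambda I$ preserves every subspace) and then cite Aronszajn--Smith for the compact operator $K$. The paper instead observes that $T$ commutes with $K$ and invokes Lomonosov's theorem directly for $T$. Your route is slightly more elementary, since Aronszajn--Smith (1954) is both earlier and weaker than Lomonosov (1973); the paper's route avoids the explicit ``$T$-invariant iff $K$-invariant'' step but pays for it by citing a stronger theorem. Either argument is perfectly adequate here, and you even mention Lomonosov as an alternative, so you are clearly aware of both options.
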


\begin{proof}
Write $T=\lambda I+K$, where $\lambda\in \c$ and $K\in \mathfrak{B}_0(H)$.  If $K=\{0\}$, then take $E:=\c\cdot x$, where $x\in H\setminus \{0\}$ is arbitrary.  Otherwise, observe that $T$ commutes with $K$; combine this with the result of Lomonosov (see \cite{Con}, VI.4.13) which states that a bounded linear operator on a complex Banach space which commutes with a nonzero compact operator must have a proper closed invariant subspace.
\end{proof}







\end{document}